\newtheorem{theorem}{Theorem}[section]
\newtheorem{rem}[theorem]{Remark}
\DeclareMathOperator{\im}{im}
\DeclareMathOperator{\Hom}{Hom}
\DeclareMathOperator{\GL}{GL}
\DeclareMathOperator{\Ext}{Ext}
\title{A remark on singular sets of vector bundle morphisms}
\author{Maciej Starostka and Nils Waterstraat}
\begin{document}
\date{}
\maketitle

\footnotetext[1]{{\bf 2010 Mathematics Subject Classification: Primary 57R20; Secondary 55R25, 54F45, 55M10}}
\footnotetext[2]{N. Waterstraat was supported by the Berlin Mathematical School and the SFB 647 ``Space--Time--Matter''.}

\begin{abstract}
If a characteristic class for two vector bundles over the same base space does not coincide, then the bundles are not isomorphic. We give under rather common assumptions a lower bound on the topological dimension of the set of all points in the base over which a morphism between such bundles is not bijective. Moreover, we show that this set is topologically non-trivial.
\end{abstract}

\section{Introduction}
Let $E$ and $F$ be real or complex vector bundles of the same dimension over a common base space $X$ and let $L:E\rightarrow F$ be a vector bundle morphism, that is, a fibrewise linear continuous map. For $\lambda\in X$, let us denote by $E_\lambda$, $F_\lambda$ the fibres of the bundles over $\lambda$. We consider the set

\[\Sigma(L)=\{\lambda\in X:\,L_\lambda\notin\GL(E_\lambda,F_\lambda)\}\subset X\]
of elements in $X$ over which $L$ is not an isomorphism, which we call the singular set of $L$. From the continuity of the determinant, it is clear that $\Sigma(L)$ is closed. Moreover, $\Sigma(L)=\emptyset$ if and only if $L$ is an isomorphism, and $\Sigma(L)=X$ can occur, for example, if $L$ maps $E$ into the zero section in $F$. We assume that the base space $X$ is a closed manifold which, depending on the characteristic class that we consider, may also assumed to be orientable. Our results show that the non-equality of some characteristic class for two bundles $E$ and $F$ may impose strong restrictions on the shape and size of $\Sigma(L)$, which even hold independently of the particular morphism $L$.\\
We discuss our results in two separate sections. In Section 2 we consider characteristic classes that are defined for any vector bundle over $X$ and prove our theorem which we subsequently apply to Stiefel-Whitney and Pontryagin classes for real vector bundles, and to Chern classes in the complex case. In Section 3 we consider orientation preserving morphisms on real vector bundles and characteristic classes which are defined for oriented bundles. We show that the proof of our theorem from the second section carries over to this setting and we apply it to the Euler class and to Wu classes.

\subsubsection*{Acknowledgements}
The ideas that are contained in this paper were developed while the authors enjoyed the inspiring atmosphere of the ``Stefan Banach International Mathematical Center'' in Warsaw. Moreover, we would like to thank Thomas Schick for valuable hints and suggestions on an early version of our results.

%%%%%%%%%%%%%%%%%%%%%%%%%%%%%%%%%%%%%%%%%%%%%%%%%%%%%%%%%%%%%%%%%%%%%%%%%%%%%%%%%%%%%%%%%%%%%%%%%%%%%%%%%%%%%%%%%%%%%%%%%%%%%%%%%%%%%%%%%%%%%%%%%%%%%%%%%%%%%%%%%%%%%%%%%%%%%%%%%%%%%%%%%%%%%%%%%%%%%%%%%%%%%%%%%%%%%%%%%%%%%%%%%%%%%%%%%%%%%%%%%%%%%%%%%%%%%%%%%%%%%%%%%%%%%%%%%%%%%%%%%%%%%%%%%%%%%%%%%%%%%%%%%%%%%%%%%%%%%%%%%%%%%%%%%%%%%%%%%%%%%%%%%%%%%%%%%%%%%%%%%%%%%%%%%%%%%%%%%%%%%%%%%%%%%%%%%%%%%%%%%%%%%%%%%%%%%%%%%%%%%%%%%%%%%%%%%%%%%%%%%%%%%%%%%%%%%%%%%%%%%%%%%%%%%%%%%%%%%%%%%%%%%%%%%%%%%%%%%%%%%%%%%%%%%%%%%%%%%%%%%%%%%%%%%%%%%%%%%%%%%%%%%%%%%%%%%%%%%%%%%%%%%%%%%%%%%%%%%%%%%%%%%%%%%%%%%%%%%%%%%%%%%%%%%%%%%%%%%%%%%%%%%%%%%%%%%%%%%%%%%%%%%%%%%%%%%%%%%%%%%%%%%%%%%%%%%%%%%%%%%%%%%%%%%%%%%%%%%%%%%%%%%%%%%%%%%%%%%%%%%%%%%%%%%%%%%%%%%%%%%%%%%%%%%%%%%%%%%%%%%%%%%%%%%%%%%%%%%%%%%%%%%%%%%%%%%%%%%%%%%%%%%%%%%%%%%%%%%%%%%%%%%%%%%%%%%%%%%%%%%%%%%%

\section{Main theorem and examples}
In what follows we assume that the base space $X$ is a (topological) manifold of dimension $n$ and that $R$ is a principal ideal domain.  We denote by $H^k(X;G)$ the $k$-th singular cohomology group and by $\check{H}^{k}(X;G)$ the $k$-th \v{C}ech cohomology group of $X$ for $k\in\mathbb{Z}$ and some $R$-module $G$. Before we state our main theorem, we want to introduce some notation.\\
First, by an $H^k(\cdot;G)$-valued characteristic class we mean a map $c$ which assigns to any vector bundle $E$ over a manifold $X$ a cohomology class $c(E)\in H^k(X;G)$ such that $f^\ast c(E_2)=c(E_1)$ if $f:X\rightarrow Y$ is a continuous map which is covered by a bundle map $E_1\rightarrow E_2$ (cf. \cite[p. 69]{MiSta}). Note that we obtain the immediate properties

\begin{enumerate}
	\item[i)] $c(f^\ast E)=f^\ast c(E)\in H^k(Y;G)$ for any continuous map $f:Y\rightarrow X$,
	\item[ii)]  $c(E)=c(F)$ if $E$ and $F$ are isomorphic bundles over the same base $X$.
\end{enumerate}

Second, the covering dimension $\dim A$ of a topological space $A$ is the minimal value of $n\in\mathbb{N}$ such that every finite open cover of $A$ has a finite open refinement in which no point is included in more than $n+1$ elements. It coincides with the corresponding definitions of dimensions for finite CW-complexes and compact manifolds (cf. \cite{Fritsch}, \cite{Dimension}). Moreover, the covering dimension of a space $A$ is related to its \v{C}ech cohomology groups: if $\check{H}^{k}(A;G)$ is non-trivial for some $k\in\mathbb{N}$, then $k$ is a lower bound on $\dim A$ (cf. \cite[Prop. V.1.2]{Dimension}).\\
Third, recall that for every $\lambda\in X$ we have $H_n(X,X\setminus\{\lambda\};R)\cong R$. A local $R$-orientation $o(\lambda)$ at $\lambda$ is a generator of the $R$-module $H_n(X,X\setminus\{\lambda\};R)$ and $X$ is called $R$-orientable if there exists a continuous local $R$-orientation in the following sense: there is an open cover $\{U_i\}_{i\in I}$ of $X$ and elements $\mu_{U_i}\in H_n(X,X\setminus U_i;R)$ which are mapped by

\[\iota_\ast: H_n(X,X\setminus U_i;R)\rightarrow H_n(X,X\setminus\{\lambda\};R)\]
to $o(\lambda)$ for all $\lambda\in U_i$ and $i\in I$, where $\iota: (X,X\setminus U_i)\rightarrow (X,X\setminus\{\lambda\})$ denotes the inclusion.
%and the set $\hat{X}$ of all pairs $(\lambda,o(\lambda))$ can be equipped canonically with a topology such that the projection $p:\hat{X}\rightarrow X$ onto the first component is a $2$-fold covering, which is called the $R$-orientation covering of $X$. $X$ is called $R$-orientable if there exists a continuous section of $p$, which is the case if and only if $p$ is a trivial covering.
$X$ is said to be orientable if it is $\mathbb{Z}$-orientable, which is the case if and only if $H_n(X;\mathbb{Z})\cong\mathbb{Z}$. If $X$ is smooth, then this notion of orientability coincides with the common definition from differential topology.\\
Finally, the universal coefficient theorem for cohomology states that for every $k\in\mathbb{Z}$ there is an exact sequence

\[0\rightarrow\Ext^1_R(H_{k-1}(X;R),G)\xrightarrow{\beta^k} H^k(X;G)\xrightarrow{\alpha^k}\Hom_R(H_k(X;R),G)\rightarrow 0.\]
In what follows, we write as usual $\alpha^k(\eta)\xi=\langle\eta,\xi\rangle$ for $\eta\in H^k(X;G)$ and $\xi\in H_k(X;R)$.\\  
Now our theorem reads as follows:

\begin{theorem}\label{theorem}
Let $E$ and $F$ be real or complex bundles over the closed $R$-orientable manifold $X$ of dimension $n$. Let $L:E\rightarrow F$ be a bundle morphism and let $c$ be a characteristic class having values in $H^k(\cdot;G)$. If 

\begin{align}\label{assumption}
c(E)-c(F)\notin\im\beta^k\subset H^k(X;G),
\end{align}
then there exists a cohomology class in $\check{H}^{n-k}(X;R)$ which restricts to a non-trivial class in\linebreak $\check{H}^{n-k}(\Sigma(L);R)$. In particular, $\dim\Sigma(L)\geq n-k$.
\end{theorem}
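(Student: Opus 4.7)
The plan is to exploit two key facts: $L$ is an isomorphism on the open set $V:=X\setminus\Sigma(L)$, so $c(E)-c(F)$ restricts trivially there; and $X$ is a closed $R$-orientable manifold, so Poincar\'e duality is available. Combining these through a relative cup product argument should produce the non-triviality of the desired class on $\Sigma(L)$.

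First, I would produce a Poincar\'e dual candidate for $\eta$. The hypothesis $c(E)-c(F)\notin\im\beta^k$ together with exactness of the universal coefficient sequence gives $\xi\in H_k(X;R)$ with $\langle c(E)-c(F),\xi\rangle\neq 0$. Fixing an $R$-orientation, Poincar\'e duality writes $\xi=\eta\cap[X]_R$ for some $\eta\in H^{n-k}(X;R)$; this $\eta$ will be the class claimed by the theorem.

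Next, I would show by contradiction that $\eta$ restricts non-trivially to $\check{H}^{n-k}(\Sigma(L);R)$. If the restriction vanished, then tautness of the closed subset $\Sigma(L)$ in the manifold $X$ would give $\check{H}^{n-k}(\Sigma(L);R)=\varinjlim_{U\supset\Sigma(L)} H^{n-k}(U;R)$, and hence some open neighborhood $U$ of $\Sigma(L)$ would satisfy $\eta|_U=0$. The long exact sequence of $(X,U)$ then lifts $\eta$ to $\tilde\eta\in H^{n-k}(X,U;R)$. Symmetrically, $(c(E)-c(F))|_V=0$ lifts $c(E)-c(F)$ to $\tilde c\in H^k(X,V;G)$.

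The pairing step closes the argument. The relative cup product places $\tilde c\smile\tilde\eta$ in $H^n(X,U\cup V;G)$; since $U\supset\Sigma(L)$ forces $U\cup V=X$, this group vanishes. Naturality of the cup product therefore gives $(c(E)-c(F))\smile\eta=0$ in $H^n(X;G)$, and evaluating on $[X]_R$ via the standard identity $\langle a\smile b,[X]_R\rangle=\langle a,b\cap[X]_R\rangle$ yields $\langle c(E)-c(F),\xi\rangle=0$, contradicting the choice of $\xi$. The lower bound $\dim\Sigma(L)\geq n-k$ then follows from the \v{C}ech-cohomological criterion for covering dimension cited in the introduction. The main technical point I expect to pin down carefully is the tautness and direct-limit step connecting singular cohomology of open neighborhoods to \v{C}ech cohomology of $\Sigma(L)$; once that is in place, the rest is a clean application of Poincar\'e duality and relative cup products.
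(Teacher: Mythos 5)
Your argument is correct, but it takes a genuinely different route from the paper. Both proofs start identically: the hypothesis $c(E)-c(F)\notin\im\beta^k$ gives, via the universal coefficient sequence, a class $\xi\in H_k(X;R)$ with $\langle c(E)-c(F),\xi\rangle\neq 0$, whose Poincar\'e dual $\eta$ is the class to be restricted, and both exploit that $c(E)-c(F)$ pulls back to zero on $V=X\setminus\Sigma(L)$ because $L$ is an isomorphism there. The paper then invokes Poincar\'e--Lefschetz duality for the closed pair (Bredon, Cor.\ VI.8.4), identifying $\check{H}^{n-k}(\Sigma(L);R)$ with $H_k(X,X\setminus\Sigma(L);R)$, and runs a short exactness argument in the long exact homology sequence of $(X,X\setminus\Sigma(L))$: if $\pi_\ast\xi=0$ then $\xi=j_\ast\zeta$ and evaluating $c(E)-c(F)$ on $j_\ast\zeta$ gives the contradiction. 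You instead avoid duality for the closed subset entirely: you use tautness (continuity of \v{C}ech/Alexander--Spanier cohomology for the closed, here compact, subset $\Sigma(L)$, together with \v{C}ech $\cong$ singular on the open neighborhoods, which are manifolds) to replace vanishing on $\Sigma(L)$ by vanishing on some open $U\supset\Sigma(L)$, lift $\eta$ and $c(E)-c(F)$ to $H^{n-k}(X,U;R)$ and $H^k(X,V;G)$, and use the relative cup product into $H^n(X,U\cup V;G)=H^n(X,X;G)=0$ (the couple $\{U,V\}$ is excisive since both are open) plus the cup--cap--evaluation identity to contradict $\langle c(E)-c(F),\xi\rangle\neq0$. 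What each buys: the paper's proof is shorter given the Bredon citation and identifies the non-trivial class on $\Sigma(L)$ explicitly as the Lefschetz dual of $\pi_\ast\xi$; yours only needs Poincar\'e duality for the closed manifold $X$ itself plus tautness, so it is more self-contained and in effect re-proves by hand the piece of duality for the pair that the paper cites. The points you flag as needing care (tautness and the direct-limit step, the mixed-coefficient pairing $G\otimes_R R\to G$, and sign/order conventions in $\langle a\smile b,[X]_R\rangle=\langle a,b\cap[X]_R\rangle$) are all standard and harmless here, since only non-vanishing is needed.
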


\begin{proof}
By the universal coefficient theorem and \eqref{assumption}, we can find $0\neq\xi\in H_{k}(X;R)$ such that $\langle c(E)-c(F),\xi\rangle\neq 0\in G$. Let $\eta\in\check{H}^{n-k}(X;R)$ denote its Poincar\'e dual, where we use that $X$ is compact and $R$-oriented.  According to \cite[Cor. VI.8.4]{Bredon}, we have a commutative diagram

\begin{align}\label{exact}
\begin{split}
\xymatrix{&\check{H}^{n-k}(X;R)\ar[r]^(.47){i^\ast}&\check{H}^{n-k}(\Sigma(L);R)\\
H_{k}(X\setminus\Sigma(L);R)\ar[r]^(.57){j_\ast}&H_{k}(X;R)\ar[u]\ar[r]^(.37){\pi_\ast}&H_{k}(X,X\setminus\Sigma(L);R)\ar[u]
}
\end{split}
\end{align}
where the vertical arrows are isomorphisms given by Poincar\'e-Lefschetz duality and the lower horizontal sequence is part of the long exact homology sequence of the pair $(X,X\setminus\Sigma(L))$. Because of the commutativity, the class $i^\ast\eta$ is dual to $\pi_\ast\xi$ and we now assume by contradiction the triviality of the latter one.\\
Then, by exactness, there exists $\zeta\in H_{k}(X\setminus\Sigma(L);R)$ such that $\xi=j_\ast\zeta$. Since the pullback $j^\ast L:j^\ast E\rightarrow j^\ast F$ is an isomorphism of the bundles over the manifold $X\setminus\Sigma(L)$, we obtain from the properties of $c$

\begin{align}\label{equality}
c(j^\ast E)=c(j^\ast F).
\end{align}
Consequently,

\begin{align*}
0=\langle c(j^\ast E)-c(j^\ast F),\zeta\rangle=\langle j^\ast(c(E)-c(F)),\zeta\rangle=\langle c(E)-c(F),j_\ast\zeta\rangle=\langle c(E)-c(F),\xi\rangle\neq 0,
\end{align*}
which is a contradiction. Hence $\pi_\ast\xi$ and so $i^\ast\eta\in\check{H}^{n-k}(\Sigma(L);R)$ is non-trivial.
\end{proof}

\begin{rem}
The idea to use Poincar\'e-Lefschetz duality and \eqref{exact} for estimating the topological dimension of a ``singular set'' from below is taken from topological bifurcation theory for nonlinear operator equations (cf. \cite[Thm. 3.1]{FiPejsachowiczII}).
\end{rem}
%The assumption that $H_{k-1}(X;G)$ is free holds in particular if $G$ is a field, or if $k=1$ and $G$ is free.
Let us now discuss some examples. The Stiefel-Whitney classes $w_k(E)\in H^k(X;\mathbb{Z}_2)$ (cf. \cite[\S 4]{MiSta}) are defined for real bundles $E$. Since $G=\mathbb{Z}_2$ is a field, $\Ext^1_{\mathbb{Z}_2}(H_{k-1}(X;\mathbb{Z}_2),\mathbb{Z}_2)$ vanishes and hence \eqref{assumption} holds if and only if $w_k(E)\neq w_k(F)$. Moreover, every manifold $X$ is $\mathbb{Z}_2$-orientable. Of particular interest are the first and second Stiefel-Whitney classes. A vector bundle $E$ is orientable if and only if $w_1(E)$ is trivial (cf. the subsequent section). Hence for a morphism $L$ between an orientable and a non-orientable bundle, the singular set $\Sigma(L)$ is at least of dimension $n-1$. Let us now assume that $L$ is a morphism between two orientable bundles $E$ and $F$, and let us recall that an orientable bundle admits a spin structure if and only if its second Stiefel-Whitney class vanishes. We obtain from our theorem that the dimension of $\Sigma(L)$ is at least $n-2$ if $w_2(E)=0$ and $w_2(F)\neq 0$, or vice versa.\\
Further characteristic classes for real vector bundles are the Pontryagin classes $p_k(E)\in H^{4k}(X;\mathbb{Z})$ (cf. \cite[\S 15]{MiSta}). Since $G=\mathbb{Z}$ in this case, we need to require that the base manifold $X$ is orientable in order to apply Theorem \ref{theorem}. Moreover, in contrast to the Stiefel-Whitney classes, $\Ext^1_\mathbb{Z}(H_{4k-1}(X;\mathbb{Z}),\mathbb{Z})$ is non-trivial in general, and so it is more difficult to check assumption \eqref{assumption}. Finally, note that the best result on the dimension that we can obtain is $\dim\Sigma(L)\geq n-4$ if $p_1(E)-p_1(F)\notin\im\beta^4$.\\
Characteristic classes for complex bundles $E$ and $F$ are the Chern classes $c_k(E)\in H^{2k}(X;\mathbb{Z})$ (cf. \cite[\S 14]{MiSta}). As for Pontryagin classes, $X$ need to be orientable in order to apply Theorem \ref{theorem} and $\Ext^1_\mathbb{Z}(H_{2k-1}(X;\mathbb{Z}),\mathbb{Z})$ is non-trivial in general. Here the best result on the covering dimension that we can get is $\dim\Sigma(L)\geq n-2$ if $c_1(E)-c_1(F)\notin\im\beta^2$.

\section{Oriented bundles}
In this section we consider real oriented vector bundles $E$, $F$ over a manifold $X$. Let us recall that a vector bundle $E$ is orientable if there exists a function which assigns an orientation to each fibre in such a way that near each point of the base there is a local trivialization carrying the orientations of fibres into the standard orientation of $\mathbb{R}^n$. By an $H^k(\cdot;G)$-valued characteristic class for oriented bundles we mean a map $e$ which assigns to any oriented vector bundle $E$ over a manifold $X$ a cohomology class $e(E)\in H^k(X;G)$ such that $f^\ast e(E_2)=e(E_1)$ if $f:X\rightarrow Y$ is a continuous map which is covered by an orientation preserving bundle map $E_1\rightarrow E_2$. We call a bundle morphism $L:E\rightarrow F$ orientation preserving if $\Sigma(L)\neq\emptyset$ and $j^\ast L:j^\ast E\rightarrow j^\ast F$ is orientation preserving, where $j:X\setminus\Sigma(L)\hookrightarrow X$ denotes the canonical inclusion. Note that if $L$ is an isomorphism, then $\Sigma(L)=X$ and our definition coincides with the usual one for bundle isomorphisms. Theorem \ref{theorem} for oriented bundles reads as follows:

\begin{theorem}\label{theoremII}
Let $E$ and $F$ be real oriented bundles over the closed $R$-orientable manifold $X$ of dimension $n$. Let $L:E\rightarrow F$ be an orientation preserving bundle morphism and let $e$ be a characteristic class for oriented bundles having values in $H^k(\cdot;G)$. If 

\begin{align}\label{assumptionII}
e(E)-e(F)\notin\im\beta^k\subset H^k(X;G),
\end{align}
then there exists a cohomology class in $\check{H}^{n-k}(X;R)$ which restricts to a non-trivial class in\linebreak $\check{H}^{n-k}(\Sigma(L);R)$. In particular, $\dim\Sigma(L)\geq n-k$.
\end{theorem}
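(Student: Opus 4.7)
The plan is to replicate the argument of Theorem \ref{theorem} almost verbatim, replacing the characteristic class $c$ by $e$ and invoking the orientation-preserving hypothesis on $L$ at precisely the step where isomorphism-invariance of the class is used. Since the other ingredients---the universal coefficient theorem, Poincar\'e-Lefschetz duality, and the commutative diagram from \cite[Cor.~VI.8.4]{Bredon}---depend only on $X$ being a closed $R$-orientable manifold and not on the nature of the bundles, they transfer without modification.

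Concretely, I would first apply \eqref{assumptionII} together with the universal coefficient sequence to produce a homology class $\xi\in H_{k}(X;R)$ with $\langle e(E)-e(F),\xi\rangle\neq 0$. Passing to the Poincar\'e dual $\eta\in\check{H}^{n-k}(X;R)$ and inserting it into diagram \eqref{exact}, the task reduces to showing that $\pi_\ast\xi\neq 0$ in $H_{k}(X,X\setminus\Sigma(L);R)$; granting this, commutativity of the diagram forces $i^\ast\eta\in\check{H}^{n-k}(\Sigma(L);R)$ to be non-trivial, and the dimension bound follows from the relation between \v{C}ech cohomology and covering dimension recalled in the introduction to this section.

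The only place where a new input is required is the contradiction step. Assuming $\pi_\ast\xi=0$, exactness of the homology sequence of the pair provides $\zeta\in H_{k}(X\setminus\Sigma(L);R)$ with $\xi=j_\ast\zeta$. In the unoriented setting one concluded $c(j^\ast E)=c(j^\ast F)$ immediately from the fact that $j^\ast L$ is an isomorphism. Here I would instead invoke the definition of orientation-preserving bundle morphism given just before the theorem: by hypothesis $j^\ast L:j^\ast E\rightarrow j^\ast F$ is an orientation-preserving bundle isomorphism over $X\setminus\Sigma(L)$, so the identity map of $X\setminus\Sigma(L)$ is covered by an orientation-preserving bundle map from $j^\ast E$ to $j^\ast F$, and the defining property of $e$ yields $e(j^\ast E)=e(j^\ast F)$. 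The same chain of equalities
\[0=\langle e(j^\ast E)-e(j^\ast F),\zeta\rangle=\langle j^\ast(e(E)-e(F)),\zeta\rangle=\langle e(E)-e(F),j_\ast\zeta\rangle=\langle e(E)-e(F),\xi\rangle\neq 0\]
then produces the required contradiction.

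I do not foresee a serious obstacle: the argument is essentially a translation. The one point that needs a brief sanity check is that the definition of orientation-preserving morphism presupposes $\Sigma(L)\neq\emptyset$, but this is automatic under \eqref{assumptionII}---if $L$ were a global isomorphism one would have $e(E)=e(F)$ and hence $e(E)-e(F)=0\in\im\beta^k$, contradicting the assumption. Thus no separate treatment of the empty singular set is needed.
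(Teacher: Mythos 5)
Your proposal is correct and matches the paper's own proof, which simply runs the argument of Theorem \ref{theorem} verbatim and uses the definition of orientation preserving morphism to guarantee $e(j^\ast E)=e(j^\ast F)$ at the one step where isomorphism-invariance was used before. Your closing sanity check about $\Sigma(L)\neq\emptyset$ is a harmless extra observation consistent with the paper's definition.
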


\begin{proof}
The proof is verbatim as for Theorem \ref{theorem}. One only has to note that $j^\ast L:j^\ast E\rightarrow j^\ast F$ is by definition an orientation preserving bundle morphism and so the corresponding equality \eqref{equality} holds.
\end{proof}

As our notation suggests, an example of a characteristic class for oriented bundles is the Euler class $e(E)\in H^r(X;\mathbb{Z})$ (cf. \cite[\S 9]{MiSta}), where $r$ is the fibre dimension of the bundle $E$. A particular example appears if we consider endomorphisms $L:E\rightarrow E$ of oriented bundles having a non-trivial Euler class $e(E)\neq 0\in H^r(X;\mathbb{Z})$. Let us assume that $X$ is orientable and $H_{r-1}(X;\mathbb{Z})$ is free. Since $\Ext^1_\mathbb{Z}(H_{r-1}(X;\mathbb{Z}),\mathbb{Z})=0$ in this case, we have $\im\beta^r=0$ and conclude that $H^r(X;\mathbb{Z})\cong\Hom_\mathbb{Z}(H_r(X;\mathbb{Z}),\mathbb{Z})$ is torsion-free. From $e(-E)=-e(E)$ (cf. \cite[Prop. 9.3]{MiSta}), where $-E$ denotes the bundle $E$ with the reversed orientation, we obtain $\dim\Sigma(L)\geq n-r$ for any endomorphism which is orientation reversing over $X\setminus\Sigma(L)$.\\
Further examples to which Theorem \ref{theoremII} can be applied are Wu classes $q_k(E)\in H^{2(p-1)k}(X;\mathbb{Z}_p)$, where $p$ is an odd prime (cf. \cite[\S 19]{MiSta}). Since $\Ext^1_{\mathbb{Z}_p}(H_{2(p-1)k-1}(X;\mathbb{Z}_p),\mathbb{Z}_p)$ vanishes, again \eqref{assumptionII} holds if and only if $q_k(E)\neq q_k(F)$. However, depending on the prime $p$, the best estimate on the dimension that we can obtain is $\dim\Sigma(L)\geq n-2(p-1)$ if $q_1(E)\neq q_1(F)$.

\thebibliography{9999999}

\bibitem[Br93]{Bredon} G.E. Bredon, \textbf{Topology and Geometry}, Graduate Texts in Mathematics \textbf{139}, Springer, 1993

\bibitem[Fed90]{Dimension} V.V. Fedorchuk, \textbf{The Fundamentals of Dimension Theory}, Encyclopaedia of Mathematical Sciences \textbf{17}, General Topology I, 1990, 91-202

\bibitem[FP91]{FiPejsachowiczII} P.M. Fitzpatrick, J. Pejsachowicz, \textbf{Nonorientability of the Index Bundle and Several-Parameter Bifurcation}, J. Funct. Anal. \textbf{98}, 1991, 42--58

\bibitem[FP90]{Fritsch} R. Fritsch, R. Piccinini, \textbf{CW-complexes and Euclidean spaces}, Fourth Conference on Topology (Italian) (Sorrento, 1988),  Rend. Circ. Mat. Palermo (2) Suppl.  No. 24  (1990), 79--95

%\bibitem[Gr73]{Greenberg} M.J. Greenberg, \textbf{Lectures on Algebraic Topology}, Mathematics Lecture Note Series, W.A. Benjamin, Inc., 1973

\bibitem[MS74]{MiSta} J.W. Milnor, J.D. Stasheff, \textbf{Characteristic Classes}, Princeton University Press, 1974

\vspace{1cm}
Maciej Starostka\\
Institute of Mathematics\\
Polish Academy of Sciences\\
00-956 Warsaw\\
ul. Sniadeckich 8\\
and\\
Gdansk University of Technology\\
80-233 Gdañsk,\\
ul. Gabriela Narutowicza 11/12 \\
Poland\\
E-mail: maciejstarostka@gmail.com

\vspace{1cm}
Nils Waterstraat\\
Institut für Mathematik\\
Humboldt Universität zu Berlin\\
Unter den Linden 6\\
10099 Berlin\\
Germany\\
E-mail: waterstn@math.hu-berlin.de

\end{document}